\newtheorem{theorem}{Theorem}[section]
\newtheorem{lemma}[theorem]{Lemma}
\newtheorem{varthmC}{Conjecture}
\newtheorem{varthm1}{Main Theorem}
\begin{document}
\title{Carmichael Numbers with Prime Numbers of Prime Factors}
\author{Thomas Wright}
\maketitle

\begin{abstract} Under the assumption of Heath-Brown's conjecture on the first prime in an arithmetic progression, we prove that there are infinitely many Carmichael numbers $n$ such that the number of prime factors of $n$ is prime. \end{abstract}

\section{Introduction}

Recall that a Carmichael number is a composite number $n$ for which

$$a^n\equiv a\pmod n$$
for every $a\in \mathbb Z$.

Although the set of Carmichael numbers was proven to be infinite in 1994 in a paper by Alford, Granville, and Pomerance \cite{AGP}, there are still many open conjectures about Carmichael numbers.   Chief among those conjectures is the following:

\begin{varthmC}  For any fixed $R\in \mathbb N$ with $R\geq 3$, there exist infinitely many Carmichael numbers with exactly $R$ prime factors.
\end{varthmC}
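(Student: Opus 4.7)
The plan is to adapt the Alford--Granville--Pomerance (AGP) framework while trying to gain control over the number of prime factors. Fix $R\geq 3$ and choose a highly composite integer $L$; the natural pool of ``allowable'' primes consists of primes $p$ with $p-1 \mid L$, since any squarefree product of such primes automatically satisfies Korselt's criterion modulo each $p_i-1$. The Carmichael condition for $n = p_1 p_2 \cdots p_R$ then reduces to the single multiplicative constraint $p_1 p_2 \cdots p_R \equiv 1 \pmod{L}$ in the abelian group $G = (\mathbb{Z}/L\mathbb{Z})^*$.

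First I would establish, using strong estimates on primes in arithmetic progressions (Heath-Brown's conjecture is a natural candidate, as in the abstract), that for a suitable $L$ the pool of allowable primes up to $x$ is large and well-distributed across the residue classes of $G$. The heart of the proof would then be a zero-sum statement: given a sufficiently large multiset $S \subset G$ of realized residue classes, find a subset of size \emph{exactly} $R$ whose product equals $1$ in $G$. In the AGP construction the subset size is only bounded within a range, never fixed, so the required input is a refined Davenport-constant-style theorem guaranteeing zero-sum subsets of every prescribed cardinality in a suitable window.

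The main obstacle is precisely this rigidity of cardinality. For very small $R$ (say $R=3$) one could try to bypass the zero-sum step by invoking Dickson's $k$-tuple conjecture to produce $3$-tuples $(p_1,p_2,p_3)$ satisfying the Korselt congruences via a single-parameter family, but this approach does not scale: as $R$ grows the number of simultaneous linear conditions on the primes grows, and one quickly leaves the reach of any currently available or even conjectured prime-tuple result. For general $R$, the AGP output is a subset of whatever size naturally emerges from pigeonhole in $G$, and forcing this size to equal a prescribed $R$ appears to demand either much stronger zero-sum combinatorics in $(\mathbb{Z}/L\mathbb{Z})^*$ or a new analytic input counting prime tuples with a prescribed multiplicative product. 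This is where essentially all the difficulty is concentrated, and it is presumably why the present paper settles for the weaker statement that the \emph{number} of prime factors is prime --- merely constraining the count to an infinite set --- rather than the full conjecture for each fixed $R$.
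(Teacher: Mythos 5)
The statement you were handed is not a theorem of the paper: it is stated as an open conjecture, included purely as motivation, and the paper offers no proof of it. What the paper actually proves (conditionally on Heath-Brown's conjecture) is the much weaker assertion that the number of prime factors lies in some infinite set, namely the primes. Your proposal correctly recognizes this and does not constitute a proof; it is a diagnosis of where the difficulty sits. On that score you are essentially right: the AGP zero-sum machinery (the van Emde Boas--Kruyswijk/Meshulam bound on $n(G)$ together with the counting theorem for subsequences of length between $t-n$ and $t$) only pins the subset size to a window of width $n(G)$, and forcing it to equal a prescribed $R$ is exactly the open problem. The paper's trick is to exploit the one degree of freedom that survives --- multiplying $g$ identity-product subsets of a common length $h$ and appending one extra prime to reach $gh+1$ factors --- which can hit any sufficiently dense multiplicatively-structured target set but cannot hit a single fixed $R$.

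One factual correction: your claim that the Dickson-conjecture route ``does not scale'' past $R=3$ is wrong, and the paper itself says so in the introduction. Chernick's 1939 observation is that for \emph{every} $R\geq 3$ there is an explicit admissible tuple of $R$ linear forms in $k$ (generalizing $6k+1$, $12k+1$, $18k+1$) whose simultaneous primality forces the product to be a Carmichael number with exactly $R$ prime factors. Dickson's conjecture applies to arbitrary finite admissible tuples, so the growing number of linear conditions is not an obstruction to the conditional statement; the obstruction is only that Dickson's conjecture is itself wide open for tuples of length at least $2$. The accurate summary is: the conjecture follows from Dickson for every $R\geq 3$, is unconditionally known for no $R$, and the paper contributes a different, weaker conditional result under Heath-Brown.
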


In fact, specific conjectures \cite{GP} have been made about the number of Carmichael numbers up to $x$ with specific numbers of prime factors\footnote{Interestingly, this behavior does not seem to hold if $R$ is large relative to $x$; Granville and Pomerance conjectured that if $R=\log^{\nu+o(1)} x$ where $0<\nu<1$ then there are $x^{\nu+o(1)}$ Carmichael numbers up to $x$ with exactly $R$ prime factors once $x$ is sufficiently large (see Conjecture 2a of \cite{GP}).}:

\begin{varthmC} (Granville-Pomerance, 1999) For any fixed $R\in \mathbb N$ with $R\geq 3$, let $C_R(x)$ denote the number of Carmichael numbers up to $x$ with exactly $R$ prime factors.  Then
$$C_R(x)=x^{\frac 1R+o(1)}.$$
\end{varthmC}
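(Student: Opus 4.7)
The plan is to push the Erd\H{o}s / Alford-Granville-Pomerance construction, which is essentially the only known mechanism for producing Carmichael numbers in quantity, and to refine it to count products with exactly $R$ prime factors. Fix a highly composite integer $L$ and set $\mathcal{P}_L = \{p \text{ prime} : p-1 \mid L\}$. By Korselt's criterion, any squarefree product $n = p_1 \cdots p_R$ of distinct primes from $\mathcal{P}_L$ with $n \equiv 1 \pmod L$ is a Carmichael number with exactly $R$ prime factors, so the task is to count such tuples with $n \leq x$ and extract the exponent $1/R$.

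For the lower bound $C_R(x) \geq x^{1/R - o(1)}$, I would choose $L$ so that $\mathcal{P}_L$ is dense among primes of size $x^{1/R}$, invoking a conjecture in the spirit of Heath-Brown to guarantee $|\mathcal{P}_L \cap [1, x^{1/R}]| \gg x^{1/R - o(1)}$. The number of ordered $R$-tuples from this set with product in $[x/2, x]$ is then of order $x^{1-o(1)}$, and one must localize to the residue class $1 \pmod L$. Under an equidistribution hypothesis for products of primes modulo $L$, a proportion on the order of $1/L$ should land in the correct class; calibrating $L \approx x^{(R-1)/R}$ so that the pool $\mathcal{P}_L$ is just large enough to feed the construction then gives back the target exponent $1/R$.

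For the upper bound $C_R(x) \leq x^{1/R + o(1)}$, observe that any Carmichael $n \leq x$ with exactly $R$ prime factors has smallest prime factor $p_1 \leq x^{1/R}$. Once $p_1$ is fixed, the Korselt relations $(p_i - 1) \mid (n-1)$ together with $n \leq x$ should leave only $x^{o(1)}$ admissible completions, via standard divisor-type estimates on $n-1$.

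The principal obstacle, by a wide margin, is the lower bound. The AGP construction inherently produces Carmichael numbers with \emph{many} prime factors -- the number of factors grows unboundedly with $x$ -- and pinning the count to exactly $R$ requires sharp equidistribution of products $p_1 \cdots p_R$ in residue classes modulo $L$ for $L$ far larger than any modulus reached by GRH. Heath-Brown's conjecture controls only the least prime in an arithmetic progression, so it cannot by itself deliver the second-moment equidistribution the asymptotic demands. This is precisely why one expects the paper to settle for an infinitude statement -- existence of Carmichael numbers with a prime number of factors -- rather than to recover the full Granville-Pomerance count $x^{1/R + o(1)}$.
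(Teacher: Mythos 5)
The statement you were asked about is not a theorem of the paper at all: it is the Granville--Pomerance conjecture, quoted from \cite{GP} purely as motivation, and the paper offers no proof of it. It remains open in both directions for every $R\geq 3$; indeed, even the infinitude of Carmichael numbers with exactly three prime factors is unknown unconditionally (the paper notes that Chernick's 1939 argument obtains this only under Dickson's $k$-tuples conjecture). So there is no proof in the paper to compare yours against, and what you have written is a heuristic rather than a proof. To your credit, your closing paragraph essentially recognizes this: you correctly observe that the Alford--Granville--Pomerance machine produces Carmichael numbers with an unbounded number of prime factors, and that the paper's actual theorem (infinitely many Carmichael numbers whose number of prime factors is prime) is a deliberately weaker target reached by different means.

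Concretely, both halves of your sketch rest on steps that are themselves open. For the lower bound, the ``equidistribution hypothesis for products of primes modulo $L$'' with $L\approx x^{(R-1)/R}$ is far beyond GRH-level input, and Heath-Brown's conjecture --- which the paper does assume, but only to manufacture a single auxiliary prime $P=Lk_0k_1+1$ --- says nothing about it; without that hypothesis you cannot place even one product of exactly $R$ primes from $\mathcal P_L$ in the class $1\pmod L$, let alone $x^{1/R-o(1)}$ of them. For the upper bound, the claim that fixing the smallest prime factor leaves only $x^{o(1)}$ admissible completions via ``standard divisor-type estimates'' is not known; if it were, $C_R(x)\leq x^{1/R+o(1)}$ would be a theorem, whereas the best known upper bounds for fixed $R$ (already for $R=3$) have exponents strictly larger than $1/R$. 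In short: the statement is a conjecture, the paper treats it as such, and your proposal should be read as a plausibility argument for why $1/R$ is the right exponent, not as a proof.
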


It is also known that there exist Carmichael numbers with $R$ prime factors where $R$ is any number between 3 and 10,333,229,505 \cite{AGHS}.

Unfortunately, little has been proven unconditionally about the number of Carmichael numbers with given numbers of prime factors.  As such, we have generally turned to conditional results involving the assumption of rather strong conjectures.  In this paper, we push those conditional results further, using a conjecture of Heath-Brown that has recently been applied to Carmichael numbers in a couple of different papers.

%In this paper, we prove some of the first theorems about the numbers of prime factors of a Carmichael number:

%\begin{varthm1} For any $a,M\in\mathbb N$, there are infinitely many Carmichael numbers $n$ such that the number of prime factors $n$ is congruent to $a$ mod $M$.  \end{varthm1}

%As an easy corollary, we have the following:

%\begin{corollary} There are infinitely many Carmichael numbers $n$ such that the number of prime factors of $n$ is composite.\end{corollary}

%This corollary easily follows from the original \cite{AGP} proof of infinitely many Carmichael numbers, but it is interesting to state as a converse to the next theorem, which will deal with prime numbers of prime factors.

%This can be seen by taking $a$ to be a composite number that divides $M$.

\subsection{Conditional Results}

%Once we have established unconditional results for the number of prime factors of a Carmichael number, we can move on to conditional results.  Unlike the unconditional case, conditional results have a long history with Carmichael numbers beginning.

In 1939, J. Chernick \cite{Ch} noted that under the assumption of Dickson's prime $k$-tuples conjecture, one could prove that there are infinitely many Carmichael numbers with exactly $R$ prime factors for any $R\geq 3$.  This insight came from the fact that one could construct tuples, such as the triple $6k+1$, $12k+1$, and $18k+1$, where any occurrence of all elements of the tuple being prime simultaneously would lead to a Carmichael number with exactly that many prime factors.

In papers \cite{WrF} and \cite{WrEv}, the current author was able to weaken the requirement of Dickson's full conjecture in proving the infinitude of Carmichael numbers with bounded numbers of prime factors; however, this weakened version of the theorem no longer allows us to prove Carmichael numbers with a specific number of prime factors.

In this paper, we tackle a slightly different question; rather than asking whether we can bound the number of prime factors, we wish to determine whether one can ensure that the number of prime factors is in a specific set.  To do so, however, we will need to invoke a conjecture of Heath-Brown's that has often been used in papers about Carmichael numbers:

%, but at the cost of no longer being able to isolate a specific number

%His insight came from the fact that one could construct tuples, such as the triple

%That result depends heavily on clusters of primes in specific forms (such as $6k+1$, $12k+1$, and $18k+1$; the product of any three such primes would be a Carmichael number).  Such a method is certainly effective; however, it does not leave much room for near-misses.  For instance, if we could only prove that two of any three such forms were prime infinitely often, we would have no insight into Carmichael numbers.

%In this paper, we use a method that tacks a little more closely to modern methods of Carmichael numbers.  Unfortunately, we are still forced to use a conjecture for our results; however, since our method follows the same method for Carmichael number construction as most of the recent papers on pseudoprimes (the method outlined in \cite{AGP}), and since we use the conjecture in a ``weaker" way (i.e. only using it to find one of the prime factors instead of all of them) there is reason to hope that this result can be improved.

%Here, the conjecture that we assume is a conjecture about the first prime in an arithmetic progression.  To this end, we state the following conjecture of Heath-Brown:

\begin{varthmC} (Heath-Brown, 1978): For any $(c,d)=1$, the smallest prime that is congruent to $c$ (mod $d$) is $\ll d\log^2 d$.\end{varthmC}

This conjecture has been applied to Carmichael numbers several times recently, including \cite{EPT}, \cite{BP}, and \cite{WrV}; in some cases, it has been used to prove results that later went on to be proven unconditionally \cite{Ma}, \cite{WrE}, \cite{WrA}.  Here, we show that the assumption of this conjecture allows us to prove a statement about Carmichael numbers with prime numbers of prime factors.  As is standard, we will let $\Omega(n)$ denote the number of prime factors of $n$.

\begin{varthm1} If Heath-Brown's conjecture is true then there are infinitely many Carmichael numbers $n$ for which $\Omega(n)$ is prime.\end{varthm1}

We note that the conjecture we use here is actually quite a bit stronger than necessary; it would actually be sufficient to replace the $d\log^2 d$ with something like $de^{\sqrt{\log d}}$.  However, since we are not computing anything particularly sharply in this paper, and since even this weakened conjecture is nowhere close to being proven, we have decided to work instead with the more widely recognized conjecture since it makes for cleaner exposition.

\section{Introduction: Outline}
In order to prove Main Theorem 1, we begin by following the methods commonly used to construct an infinitude of Carmichael numbers as were first laid out in \cite{AGP}.  First, we find an $L$ such that $L$ has a large number of prime factors $q$ where each $q-1$ is relatively smooth.  Having done this, we then prove that there must exist a $k$ such that there are many primes $p=dk+1$ with $d|L$.  From here, our goal will be to find a subset of these primes $p_1,\cdots, p_r$ such that the product $n=p_1\cdots p_r$ is 1 modulo $L$ (and obviously 1 modulo $k$); since we will then have $p-1|Lk|n-1$ for each prime $p|n$, this product is then Carmichael number.

Aiding us in this quest is a combinatorial result that appears in \cite{AGP}, although this result is simply an application of a theorem that appears in \cite{Me} and \cite{EK}.  For an abelian group $G$, let $n(G)$ be the smallest number such that any sequence of length $n(G)$ must contain a subsequence whose product is the identity in $G$.  Theorem 2.1 in \cite{AGP} then states the following:

\begin{theorem}[Alford, Granville, Pomerance 1994]\label{AGP2.1} Let $G$ be an abelian group, and let $r>t>n=n(G)$ be integers.  Then any sequence of $r$ elements in $G$ must contain $\left(\begin{array}{c} r \\ t \end{array}\right)/\left(\begin{array}{c} r \\ n \end{array}\right)$ distinct subsequences of length at most $t$ and at least $t-n$ whose product is the identity.
\end{theorem}

By pigeonhole principle, there must then be an $h$ with $t\geq h\geq t-n$ such that there are many sequences of length $h$ whose product is the identity.  If we took $g$ distinct such sequences of length $h$, the product of these $g$ sequences would then yield a sequence of length $gh$ whose product is also the identity.  We can then use Heath-Brown's conjecture to append one more prime onto these sequences, giving us sequences of length $gh+1$ whose product is the identity as well.  We know that there must exist a relatively small $g$ for which $gh+1$ is prime ($g\ll \log^2 h$ under Heath-Brown's conjecture, $g\ll h^{4.2}$ unconditionally by [Xy]); thus, we can use this construction to give a Carmichael number with a prime number of factors.

%however, we must alter the mechanism to make sure that our Carmichael number has the correct number of factors.  To this end, we use a theorem of Gao \cite{Gao}, which states the following:

%to construct Carmichael numbers with exactly $A$ prime factors for an $A$ that will be described later.  Products of these Carmichael numbers with $A$ prime factors are themselves Carmichael numbers, which means that we can create Carmichael numbers with $At$ prime factors for many choices of $t$.  However, since $A$ is not prime, we need to invoke the conjecture as a means to attach one more prime factor to a Carmichael number in order to create a Carmichael number with a prime number of prime factors.

%Obviously, this method gives us (unconditionally) that there are infinitely many Carmichael numbers with a composite number of factors.  Of course, while this this observation about Carmichael numbers has not explicitly appeared in the literature, it could easily be deduced from the paper of \cite{AGP}.  Our method can easily be altered to show that there are infinitely many Carmichael numbers where the number of prime factors is a multiple of

\section{Constructing an $L$}

The first section follows roughly the same blueprint as \cite{AGP} and subsequent Carmichael papers.  Let $1<\theta<2$, and let $P(q-1)$ denote the size of the largest prime divisor of $q-1$.  For some choice of $y$, let us define the set $\mathcal Q$ by
\[\mathcal Q=\{q\mbox{ }prime:\frac{y^\theta}{\log y}\leq q\leq y^{\theta},\mbox{ }P(q-1)\leq y\}.\]

We cite the following lemma, the proof of which appears in many places including Lemma 5.1 of \cite{WrE} or Lemma 2.1 of \cite{WrA}:
\begin{lemma}\label{lemtwo}  Let $\theta\in (1,2)$.  For $\mathcal Q$ as above, there exist constants $\gamma=\gamma_\theta$ and $Y=Y_\theta$ such that

\[|\mathcal Q|\geq \gamma \frac{y^{\theta}}{\log (y^\theta)}\]
if $y>Y$.
\end{lemma}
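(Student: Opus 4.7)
The plan is to obtain the bound by combining a density theorem for primes with smooth shifted values with a routine prime number theorem estimate to throw away the small primes.

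First, I would invoke the deep result (in various forms due to Erd\H{o}s, Pomerance, Friedlander, and Baker--Harman, among others) that for any fixed $1<\theta<2$ there is a constant $c_\theta>0$ such that
\[\#\{q\le x \text{ prime}: P(q-1)\le x^{1/\theta}\}\ \ge\ c_\theta\,\pi(x)\]
for all sufficiently large $x$. Specializing to $x=y^\theta$ gives at least $c_\theta\,\pi(y^\theta)$ primes $q\le y^\theta$ with $P(q-1)\le y$. This is the only genuinely hard input, and since the author cites the lemma from \cite{WrE} and \cite{WrA}, I would likewise treat this estimate as a black box rather than reproving it.

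Next, I would discard the small primes. By the prime number theorem,
\[\pi(y^\theta/\log y)\ \ll\ \frac{y^\theta/\log y}{\log(y^\theta/\log y)}\ \ll\ \frac{y^\theta}{\log^2 y},\]
which is of strictly smaller order than $\pi(y^\theta)\sim y^\theta/(\theta\log y)$. Subtracting the two counts, the number of primes $q\in[y^\theta/\log y,\,y^\theta]$ with $P(q-1)\le y$ is at least
\[c_\theta\,\pi(y^\theta)-\pi(y^\theta/\log y)\ \ge\ \gamma\,\frac{y^\theta}{\log(y^\theta)}\]
for any $\gamma<c_\theta/\theta$ and any $y>Y_\theta$ with $Y_\theta=Y_\theta(\gamma)$ chosen large enough. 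This yields the statement of the lemma.

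The only genuine obstacle is the smooth shifted prime density bound in the first step; everything else reduces to elementary bookkeeping with PNT. One subtle point worth verifying is that the cited black box is used with the parameter alignment $u=\log x/\log(x^{1/\theta})=\theta$, and the density bound is known to remain positive all the way up to $\theta$ slightly exceeding $2$ (by Baker--Harman), so the hypothesis $1<\theta<2$ in this paper is comfortably in the range where a positive proportion of primes $q\le y^\theta$ have $q-1$ being $y$-smooth.
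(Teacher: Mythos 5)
Your proposal is correct and matches the argument behind the paper's citation: the paper gives no proof of this lemma, instead deferring to Lemma 5.1 of \cite{WrE} and Lemma 2.1 of \cite{WrA}, and those proofs run exactly as you describe --- invoke the Erd\H{o}s--Pomerance--Friedlander--Baker--Harman positive-density result for primes $q\le x$ with $P(q-1)\le x^{1/\theta}$, then discard the $O(y^\theta/\log^2 y)$ primes below $y^\theta/\log y$ via the prime number theorem. The only quibble is cosmetic: since $\pi(y^\theta)\sim y^\theta/\log(y^\theta)$, any $\gamma<c_\theta$ already works, so your restriction to $\gamma<c_\theta/\theta$ is unnecessarily conservative but harmless.
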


%For a fixed $0<B<5/12$ (which is the largest range of $B$ for which our various theorems below on densities of primes in arithmetic progressions will hold, according to [Ha2]), we choose $x$ to be

%\[x=L\]

From this, we let
\[L'=\prod_{q\in \mathcal Q}q.\]
Let $B=\frac{5}{12}$, and let $\pi(x,q,a)$ denote the number of primes $p\leq x$ with $p\equiv a\pmod q$.  For a given $x$, we know that for any $d|L'$ with $d\leq x^B$,

\begin{gather}\label{BT}\pi(x,d,1)\geq \frac{\pi(x)}{2\phi(d)}\end{gather}
as long as $d$ does not have a divisor in some small exceptional set $\mathcal D_B(x)$ (see e.g. \cite{AGP}, bottom of page 2 and top of page 3).  Here, it is known that $|\mathcal D_B(x)|$ is bounded by some constant $D_B$ determined only by $B$.  So for each $d\in \mathcal D_B(x)$, we choose a prime divisor $q|d$, and we let $\mathcal P_B(x)$ be the collection of such divisors.  Then we can define

\[L=\prod_{q\in \mathcal Q,q\not\in \mathcal P_B(x)}q.\]

With this alteration, we can now ensure that any divisor $d|L$ with $d\leq x^B$ must satisfy (\ref{BT}).

\section{Bounds for primes in arithmetic progressions}\label{section4}
Next, define

$$\mathcal P_k=\{p=dk+1:p\mbox{ prime}, d|L,\left(k,L\right)=1\}.$$

Our goal in this section will be to prove that, for some value of $k$, $\mathcal P_k$ is relatively large.  For this particular choice of $k$, $\mathcal P_k$ will then comprise the prime factors of our Carmichael numbers.  This section follows the standard framework laid in Section 4 of \cite{AGP}.

First, we know by  Montgomery and
Vaughan's explicit version of the Brun-Titchmarsh theorem \cite{MV} that if $z>h^2$ then
\begin{gather}\label{BT4}
\pi(z,h,1)\leq \frac{4z}{\phi(h)\log z}.
\end{gather}
By our choice of $L$, we can assume that the sum 
\begin{gather}\label{qsum}\sum_{\stackrel{q|L}{q\mbox{ }prime}}\frac{1}{q-1}\leq \frac{1}{32}.
\end{gather}
Moreover, if $d<x^B$ then
\begin{gather}\label{logs}
(1-B)\log x\leq \log\left(dx^{1-B}\right)\leq 2(1-B)\log x.
\end{gather}
Putting (\ref{BT})-(\ref{logs}) together, we find that the number of primes $p\leq dx^{1-B}$ with $p\equiv 1\pmod d$ and $(\frac{p-1}{d},L)=1$ is
\begin{align*}
\geq &\pi  (dx^{1-B},d,1)-\sum_{\substack{q|\frac Ld \\ q\mbox{ }prime}}\pi(dx^{1-B},dq,1)\\
\geq &\frac{dx^{1-B}}{4(1-B)\phi(d)\log x}-\sum_{\substack{q|\frac Ld \\ q\mbox{ }prime}}\frac{4dx^{1-B}}{(1-B)\phi(dq)\log
x}\\
\geq &\frac{dx^{1-B}}{4(1-B)\phi(d)\log x}-\frac{dx^{1-B}}{8(1-B)\phi(d)\log
x}\\
\geq &\frac{x^{1-B}}{8(1-B)\log x}.
\end{align*}
Hence,
\begin{align*}
\sum_{k\leq x^{1-B}}|\mathcal P_k|\geq &\sum_{d|L,d\leq x^{B}}\#\bigg{\{}p\leq dx^{1-B}:p\equiv 1\pmod d,\left(\frac{p-1}{d},L\right)=1\bigg{\}}\\
\geq & \sum_{d|L,d\leq x^{B}}\frac{x^{1-B}}{8(1-B)\log x}
\end{align*}
As we are under no imperative to make $x$ or $k$ small, we can simply take $x^{B}=L$ and find that
$$\sum_{k\leq x^{1-B}}|\mathcal P_k|\geq \frac{2^{\gamma \frac{y^\theta}{\log y}}x^{1-B}}{8\log x}.$$
Since there are $x^{1-B}$ choices for $k$ in the sum above, we know by pigeonhole principle that there must exist a $k_0$ for which
$$\left|\mathcal P_{k_0}\right|\geq \frac{2^{\gamma \frac{y^\theta}{\log y}}}{8(1-B)\log x}.$$

\section{Small Orders mod $L$}

In this section, we will show that the order of an element mod $L$ is very small relative to the size of the set $\mathcal P_{k_0}$ above.  Let $\lambda(L)$ denote the Carmichael lambda function, which signifies the largest order of an element mod $L$.  In this case, if we index the $q\in \mathcal Q$ as $q_1,\cdots, q_s$, we have
$$\lambda(L)=lcm\left(q_1-1,\cdots q_s-1\right).$$
Since all of the $q$ that divide $L$ are chosen such that $q-1$ is $y$-smooth, we know that $\lambda(L)$ must be free of prime factors of size greater than $y$.  Moreover, if $r^j|\lambda(L)$ for some prime $r$ then $r^j$ must also divide $q-1$ for some $q\leq \mathcal Q$, which means $r^j\leq q\leq y^\theta$.  Thus, for a given prime $r$, let $a_r$ be the largest power of $r$ such that $r^{a_r}\leq y^\theta$.  Then
\begin{gather}\label{lambdabd}
\lambda(L)\leq \prod_{\substack{r\leq y \\ r\mbox{ }prime}}r^{a_r}\leq \prod_{\substack{r\leq y \\ r\mbox{ }prime}}y^{\theta}\leq y^{\theta \pi(y)}\leq e^{2\theta y}.
\end{gather}
This is small relative to $x$, $L$, and $\left|\mathcal P_{k_0}\right|$.  Indeed,
\begin{gather}\label{L}
L\leq \prod_{q\in \mathcal Q}q\ll \prod_{q\in \mathcal Q}y^\theta\leq y^{2\theta\gamma \frac{y^\theta}{\log y}}=e^{2\theta\gamma y^\theta}
\end{gather}
and hence since $x^B=L$,
$$\left|\mathcal P_{k_0}\right|\geq \frac{2^{\gamma \frac{y^\theta}{\log y}}}{4\log x}\gg \frac{2^{\gamma \frac{y^\theta}{\log y}}}{y^\theta}\gg e^{\frac{\gamma y^\theta}{2\log y}}.$$
%we can see that $\lambda(L)$ is fairly small relative to $\left|\mathcal P_{k_0}\right|$.  
%This stands in contrast to the size of $L$, which can be bounded below and above by
%$$L=\prod_{q\in \mathcal Q,q\not\in \mathcal P_B(x)}q\gg \left[y^{\gamma \frac{y^\theta}{2\log y}}\right]\left[y^{-\theta D_B}\right]\gg y^{\gamma \frac{y^\theta}{3\log y}},$$
%and
% (which can be simplified as $e^{\theta^{-1}\gamma y^\theta}$).

\section{The Extra Prime}\label{extra}
In Section \ref{Constructing},  we will construct a Carmichael number $n$; in doing this, we must be able to ensure that the number of prime factors of $n$ is 1 modulo a chosen variable $h$.  To do this, we introduce a conveniently constructed prime that we can often affix to the end of a Carmichael number coming from our construction to yield another Carmichael number.

In general, the \cite{AGP} construction of Carmichael numbers will require that $p-1|Lk_0$ and that the product of the primes $p|n$ will give $Lk_0|n-1$.  As such, it would be most convenient if we could find a prime $P=Lk_0+1$; after all, this $P$ would trivially satisfy $P-1|Lk_0$, and multiplying $P$ to any $n$ for which $Lk_0|n-1$ would yield $Lk_0|Pn-1$.  Unfortunately, it is difficult to guarantee that such a prime exists, so instead, we use the conjecture of Heath-Brown to find a prime that is almost as convenient.  In particular, we have the following:

\begin{lemma} Let $L$ be as defined above, and $k_0$ be as in Section \ref{section4}.  Under the assumption of Conjecture 3, there exists a $k_1\ll \log^2 L$ such that $P=Lk_0k_1+1$ is prime.\end{lemma}

The proof merely requires us to take $d=Lk_0$ and note that $k_0<L^3$.

\section{Sizes of Sets}\label{Constructing}

Now, we will need to find subsets of $\mathcal P_{k_0}$ whose products are 1 mod $Lk_0k_1$.  For a given $h$, let $C_h$ be the set of distinct subsets of exactly $h$ elements in $\mathcal P_{k_0}$ such that the product of all the elements in any of these subsets is indeed 1 mod $Lk_0k_1$.  We can then prove the following:

%Additionally, and analogously to our definition of $n(G)$, let us express by $n=n(Lk_1)$ the smallest number such that any collection of at least $n$ elements mod $Lk_1$ must contain a subset whose product is 1 mod $Lk_1$.

%With these definitions, we will prove the following:
\begin{theorem} There must exist an $h$ with $n\leq h\leq 2n$ such that
$$|C_h|\gg \left(\frac{n}{4}\right)^n.$$
\end{theorem}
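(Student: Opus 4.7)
The plan is to apply the Alford--Granville--Pomerance theorem cited above to the abelian group $G = (\mathbb Z/Lk_1\mathbb Z)^*$, taking $r = |\mathcal P_{k_0}|$ and $t = 2n$, where $n = n(Lk_1)$. The key initial reduction is that one may work modulo $Lk_1$ rather than $Lk_0k_1$: every $p = dk_0 + 1 \in \mathcal P_{k_0}$ is $\equiv 1 \pmod{k_0}$, so any product of such primes is automatically $\equiv 1 \pmod{k_0}$; by the Chinese Remainder Theorem (using $\gcd(Lk_1, k_0) = 1$, which the construction permits us to assume), any subset whose product is $\equiv 1 \pmod{Lk_1}$ lies in $C_h$.

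Granting $r > t > n$ (to be verified below), the AGP theorem then yields at least $\binom{r}{2n}\big/\binom{r}{n}$ distinct subsequences of $\mathcal P_{k_0}$ of length in $[n,2n]$ whose product is the identity of $G$. Since the primes of $\mathcal P_{k_0}$ are distinct, these subsequences correspond bijectively to distinct subsets of $\mathcal P_{k_0}$. Pigeonholing across the $n+1$ possible lengths $n, n+1, \ldots, 2n$ then produces some particular $h$ in this range with
\[
|C_h| \;\geq\; \frac{1}{n+1}\cdot\frac{\binom{r}{2n}}{\binom{r}{n}} \;=\; \frac{1}{n+1}\cdot\frac{\binom{r-n}{n}}{\binom{2n}{n}}.
\]

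Using the crude estimates $\binom{2n}{n} \leq 4^n$ and $\binom{r-n}{n} \geq ((r-2n+1)/n)^n$, the problem reduces to checking that $r$ dwarfs $n$. From the preceding sections, $r = |\mathcal P_{k_0}| \gg 2^{\gamma y^\theta/\log y}/\log x$, while the standard bound $n(Lk_1) \ll \lambda(Lk_1)\log(Lk_1)$ together with $\lambda(L) \leq e^{2\theta y}$ and $k_1 \ll \log^2 L$ yields $n = e^{O(y)}$. Since $\theta > 1$ makes $y^\theta/\log y$ swamp $y$, one has $r \gg n^{100}$ (or any desired polynomial) for large $y$, which immediately gives $(r-2n+1)/(4n) \gg n$ and hence
\[
|C_h| \;\gg\; \frac{1}{n+1}\left(\frac{r}{4n}\right)^n \;\gg\; \left(\frac{n}{4}\right)^n,
\]
as required. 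The only nontrivial point is this final size comparison, and it is precisely what the careful construction of $L$ -- in particular the bound $\lambda(L) \leq e^{2\theta y}$ forced by keeping prime divisors of $q-1$ below $y$ -- was designed to guarantee; the rest of the argument is a bookkeeping application of AGP and pigeonhole.
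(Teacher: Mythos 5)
Your proof is correct and follows essentially the same route as the paper: apply the Alford--Granville--Pomerance subsequence theorem to the group modulo $Lk_1$ (after reducing the mod $Lk_0k_1$ condition to mod $Lk_1$ via $p\equiv 1\pmod{k_0}$), take $t=2n$, and pigeonhole over the lengths $n,\dots,2n$. The only difference is that you take $r=|\mathcal P_{k_0}|$ while the paper applies the theorem to a subsequence of length $r=4n^2$; both choices work since $|\mathcal P_{k_0}|$ dwarfs every fixed power of $n$ for large $y$.
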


\begin{proof}
This is an application of Theorem 2.1.  In our case, since the group $G$ is the integers mod $Lk_1$ and $k_1\ll \log^2(Lk_0)$, we note that
\begin{align*}
\lambda(Lk_1)\leq &\lambda(L)\cdot k_1\\
\ll &e^{2\theta y}\log^2(Lk_0)\\
\ll &e^{3\theta y}
\end{align*}
where we can bound $Lk_0$ by (\ref{L}) and $\lambda(L)$ by (\ref{lambdabd}).  So
\[n(G)\ll e^{3\theta y}.\]
Now, following the notation in Theorem 2.1, the computations here will be cleanest if $t-n$ is close to $n$ and $r$ is roughly $t^2$.  So let
$$r=4n^2\ll e^{6\theta y}$$
and
$$t=2n$$
Recalling that 
$$\left(\frac{v}{w} \right)^{w}\leq \left(\begin{array}{c} v \\ w \end{array}\right)\leq \left(\frac{ve}{w} \right)^{w},$$
we can invoke Theorem \ref{AGP2.1} to find
\begin{align*}\sum_{h=n}^{2n}|C_h|\gg &\left(\begin{array}{c} 4n^2 \\ 2n \end{array}\right)/\left(\begin{array}{c} 4n^2 \\ n \end{array}\right) \\
&\gg \left(\frac{4n^2}{2n} \right)^{2n}/\left(\frac{4en^2}{n}\right)^n \\
&\gg \left(2n\right)^{2n}/\left(12n\right)^n\\
&\gg \left(2n\right)^{n}/\left(6\right)^n\\
&=\left(\frac{n}{3}\right)^n
\end{align*}
So there must exist an $h$ between $n$ and $2n$ for which
$$|C_h|\gg \frac{\left(\frac{n}{3}\right)^n}{n+1}=\frac{\left(\frac{n}{4}\right)^n\left(\frac{4}{3}\right)^n}{n+1}.$$
Since $$\left(\frac 43\right)^n\gg n+1,$$ the theorem then follows.
\end{proof}

From here, we can prove the theorem:

\begin{theorem}
For our choice of $y$, $\theta$, and $L$, there must exist a Carmichael number with a prime number of prime factors.
\end{theorem}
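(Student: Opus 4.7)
The plan is to exhibit a Carmichael number of the form
$$N \;=\; P \cdot \prod_{j=1}^{g} \prod_{p \in S_j} p,$$
where $P=Lk_0k_1+1$ is the prime produced in Section \ref{extra} and $S_1,\dots,S_g$ are pairwise disjoint subsets of $\mathcal P_{k_0}$, each of a common size $h^*$ with $\prod_{p\in S_j}p \equiv 1 \pmod{Lk_0k_1}$. If $gh^*+1$ happens to be prime, then $N$ has a prime number of prime factors; Korselt's criterion is straightforward to verify, because every $p\in S_j$ satisfies $p-1\mid Lk_0\mid Lk_0k_1=P-1$, so we only need $Lk_0k_1\mid N-1$, which follows from $P\equiv 1\pmod{Lk_0k_1}$ together with the product conditions on the $S_j$.

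To build these subsets I would partition $\mathcal P_{k_0}$ into $M\approx |\mathcal P_{k_0}|/(4n^2)$ blocks of $4n^2$ primes and run the argument of the preceding theorem on each block separately. Each block $j$ yields some $h_j\in[n,2n]$ and $\gg (n/4)^n$ subsets of that size whose product is $1\pmod{Lk_0k_1}$. Pigeonholing over the at most $n+1$ possible values of $h_j$ then selects a single $h^*\in[n,2n]$ attained by at least $M/(n+1)\gg |\mathcal P_{k_0}|/n^3$ ``good'' blocks; picking one $h^*$-subset per good block produces a pool of that many pairwise disjoint product-$1$ subsets, with disjointness automatic from the partition.

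Next I would apply the Linnik-type bound of Xylouris (or, equivalently, Heath-Brown's conjecture already in force) to choose $g$ as the smallest positive integer with $gh^*+1$ prime, obtaining $g \ll (h^*)^{c}$ for an absolute constant $c$. Since $h^*\leq 2n\ll e^{4\theta y}$, this $g$ is at worst singly exponential in $y$, whereas the pool size $|\mathcal P_{k_0}|/n^3$ is essentially doubly exponential in $y$ under the standing hypothesis $\theta>1$. Hence we can select the $g$ disjoint subsets $S_1,\ldots,S_g$ from distinct good blocks and form $N$.

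The step I expect to be the main obstacle is the combinatorial one of guaranteeing enough pairwise disjoint product-$1$ subsets of a common size: a direct matching argument on the hypergraph of all product-$1$ subsets of $\mathcal P_{k_0}$ would require vertex-degree bounds that are not in evidence. The block-plus-pigeonhole trick sidesteps this, invoking only the theorem just proved inside each small block, and then pays for everything with room to spare thanks to the enormous gap between $|\mathcal P_{k_0}|$ and the group-theoretic constant $n$.
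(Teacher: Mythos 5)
Your proposal follows the same overall route as the paper: append the Heath--Brown prime $P=Lk_0k_1+1$, multiply together $g$ product-one subsets of a common size $h$ drawn from $C_h$, use Xylouris to choose $g$ so that $gh+1$ is prime, and verify Korselt's criterion from $p-1\mid Lk_0k_1\mid N-1$. The one place you genuinely diverge is the block-partition-plus-pigeonhole step for manufacturing \emph{pairwise disjoint} product-one subsets of a common size, and this is worth dwelling on: the paper simply selects $g_0$ distinct members of $C_h$ and asserts that their union has exactly $g_0h$ prime factors, which requires disjointness that is never established --- indeed, since the lower bound on $|C_h|$ comes from applying the Alford--Granville--Pomerance combinatorial theorem to a single sequence of $r=4n^2$ elements, that sequence can support at most $4n^2/h\leq 4n$ pairwise disjoint subsets of size $h\geq n$, which is smaller than the required $g_0\ll h^{4.2}$. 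Your fix --- partitioning $\mathcal P_{k_0}$ into $\approx|\mathcal P_{k_0}|/(4n^2)$ blocks, extracting one zero-sum subset of some length in $[n,2n]$ from each, and pigeonholing over the $n+1$ possible lengths to get $\gg|\mathcal P_{k_0}|/n^3$ disjoint subsets of a common size $h^*$ --- is exactly the standard repair, and the size comparison you make ($g\ll (h^*)^{4.2}\ll e^{cy}$ against a pool of size roughly $2^{\gamma y^{\theta}/\log y}$ with $\theta>1$) closes the argument with room to spare. So your version is not just correct but tighter than the paper's own write-up; the only detail worth adding is a remark that $P$ itself is distinct from the primes in the chosen $S_j$ (so that $N$ is squarefree with exactly $g h^*+1$ prime factors), which is immediate since $P=Lk_0k_1+1$ exceeds every $p=dk_0+1$ with $d\mid L$ once $k_1>1$, and can otherwise be arranged by discarding one block.
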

Since we have infinitely many choices for $y$, the proof of this theorem will complete the proof of Main Theorem 1.
\begin{proof}
Let us take a set of primes $S\in C_h$.  Then
$$n=\prod_{p\in S}p\equiv 1\pmod{Lk_0k_1},$$
where the congruence mod $Lk_1$ is by definition of $C_h$, and all of the $p$ in $\mathcal P_{k_0}$ are such that $p\equiv 1\pmod{k_0}$.

Similarly, if we take sets $S_1,\cdots,S_j\in C_h$ then
$$n=\prod_{p\in S_1\bigcup\cdots \bigcup S_j}p\equiv 1\pmod{Lk_0k_1},$$
since the product of a bunch of 1's is still 1.

Note, then, that for any $g$ that is significantly smaller than $\left(\frac n4\right)^n$ (say $g\ll \left(\frac n4\right)^{\frac n2}$), we can take a collection of $g$ sets $S_1,\cdots,S_g$ such that
$$n=\prod_{p\in S_1\bigcup\cdots \bigcup S_g}p\equiv 1\pmod{Lk_0k_1},$$
where $n$ has exactly $gh$ distinct prime factors.

Recall also that in Section \ref{extra}, we defined an ``extra" prime $P$ such that
$$P=Lk_0k_1+1.$$

We now wish to find a prime number of the form $gh+1$ where $g$ is small.  While this is easily shown with Conjecture 3, we instead use a weaker but unconditional result by Xylouris \cite{Xy}, the idea here being to invoke conjectures as few times as possible in our proof:

\begin{varthmC} (Xylouris 2009) For any $c$ and $d$ relatively prime, there exists a prime $p$ congruent to $c$ mod $d$ such that $$p\ll d^{5.2}.$$ In other words, for any such $c$ and $d$ there exists a $k\ll d^{4.2}$ such that $p=dk+c$ is prime.
\end{varthmC}

In our construction above, then, for a given $h$, we can find a $g_0\ll h^{4.2}$ such that $g_0h+1$ is prime.  Clearly, $h^{4.2}\ll n^5$, which is much smaller than $\left(\frac n4\right)^n$.  So for $g_0\ll h^{4.2}$, let us choose sets $S_1,\cdots,S_{g_0}\in C_h$.  Then
$$n=P\cdot \prod_{p\in S_1\bigcup\cdots \bigcup S_{g_0}}p\equiv 1\pmod{Lk_0k_1}.$$
We know that $P-1=Lk_0k_1$, and for any $p\in S_j$, $p-1|Lk_0$.  So for any $p|n$, $p-1|Lk_0k_1|n-1$.  So $n$ is a Carmichael number consisting of exactly $g_0h+1$ prime factors, and $g_0h+1$ is itself prime by assumption.  

\end{proof}
Since there are infinitely many possible choices for $y$ and $L$, there must be infinitely many Carmichael numbers with a prime number of prime factors, and hence Main Theorem 1 follows.

\section{Remarks}

We note a couple of things here:

- First, we see no imperative in this paper to try to make a sharp bound for the number of prime factors or the asymptotic density of Carmichael numbers with $\Omega(n)$ equal to some specific prime because there are so many ways that this estimate could be improved.  For instance, rather than simply taking one choice of $g$, we could find many by simply applying primes in arithmetic progressions results to $gh+1$ and being careful enough to choose an $h$ that avoids exceptional zeroes.  One could also invoke the Heath-Brown conjecture in a number of different places (in lieu of \cite{Xy} or in lieu of the estimates for primes in arithmetic progressions in the lower bounds for $\left|\mathcal P_{k_0}\right|$) which would also change the bounds rather dramatically.

%- Relating to the last point: obviously, since we are already invoking the Heath-Brown conjecture at one point this paper, we could invoke it in several other places as well.  However, we thought that it would make sense to invoke the conjecture as infrequently as possible in hopes that it would be easier to turn this result into one that is unconditional.

%.  The hope is that If we were trying to find sharp bounds for the number of such Carmichael numbers with prime numbers of prime factors, we would likely invoke the conjecture again.

- Second, a couple of other possible results fall out of this method fairly easily.  For instance, the fact that there are infinitely many Carmichael numbers with a composite number of prime factors falls out almost trivially (and unconditionally); of course, this result seems unlikely to be a surprise.  Also following rather trivially (and unconditionally) from this method is the fact that there are infinitely many Carmichael numbers where the number of prime factors is a perfect square (by taking $g=h$), perfect cube (with $g=h^2$), or, in fact, any perfect power.

\section{Acknowledgements}
I would like to thank Steven J. Miller for posing this question in the first place.  I would also like to thank the referee for a very careful reading of the paper and for many helpful comments and suggestions.


\begin{thebibliography}{[HD]}
\normalsize
\bibitem[AGHS]{AGHS} W. R. Alford, J. Grantham, S. Hayman and A. Shallue.  `Constructing Carmichael numbers through improved subset-product algorithms', \textit{Math. Comp.} \textbf{83} (2014), 899-915.
\bibitem[AGP]{AGP} W. R. Alford, A. Granville, and C. Pomerance. `There are infinitely many Carmichael numbers', \textit{Ann. of Math.} (2), \textbf{139(3)} (1994), 703-722.
%\bibitem[BH]{BH} R. C. Baker and G. Harman. `Shifted primes without large prime factors', \textit{Acta Arith.},\textbf{83(4)} (1998), 331-361.
%\bibitem[BP]{BP} W. D. Banks and C. Pomerance, `On Carmichael numbers in arithmetic progressions', \textit{J. Aust. Math. Soc.}, \textbf{88(3)} (2010), 313-321.
%\bibitem[BS]{BS} R. C. Baker and W. M. Schmidt. `Diophantine problems in variables restricted to the values 0 and 1', J. Number Theory, \textbf{12(4)} (1980), 460-486.
\bibitem[BP]{BP} W. D. Banks and C. Pomerance, On Carmichael numbers in arithmetic progressions, \textit{J. Aust.
Math. Soc.} 88(3) (2010), 313--321.
\bibitem[Ca]{Ca} R. D. Carmichael, 'Note on a new number theory function', \textit{Bull. Amer. Math. Soc.}, \textbf{16} (1910), 232-238.
\bibitem[Ch]{Ch} J. Chernick, `On Fermat's simple theorem', \textit{Bull. Amer. Math. Soc.}, \textbf{45} (1935), 269-274.
\bibitem[EK]{EK} P. Van Emde Boas and D. Kruyswijk. `A combinatorial problem on finite Abelian groups III',
Zuivere Wisk. (1969) (Math. Centrum, Amsterdam).
\bibitem[EPT]{EPT} A. Ekstrom, C. Pomerance and D.S. Thakur, Infinitude of elliptic Carmichael numbers,
\textit{J. Aust. Math. Soc.} 92 (2012) 45--60.
%\bibitem[Er]{Er} P. Erd\H{o}s. `On pseudoprimes and Carmichael numbers', \textit{Publ. Math. Debrecen}, \textbf{4} (1956), 201-206.
%\bibitem[Ga]{Ga} GAO GAO GAO GAO HOW NOW GAO GAO?  A. Granville and C. Pomerance, `Two contradictory conjectures concerning Carmichael numbers', \textit{Math. Comp.}, \textbf{71} (2002), 883-908.
\bibitem[GP]{GP} A. Granville and C. Pomerance, `Two contradictory conjectures concerning Carmichael numbers', \textit{Math. Comp.}, \textbf{71} (2002), 883-908.
%\bibitem[Ha1]{Ha1} G. Harman, `On the number of Carmichael numbers up to x', \textit{Bull. London Math. Soc.}, \textbf{37(5)} (2005), 641-650.
%\bibitem[Ha2]{Ha2} G. Harman, `Watt’s mean value theorem and Carmichael numbers', \textit{Int. J. Number Theory}, \textbf{4(2)} (2008), 241-248.
\bibitem[Ko]{Ko} A. Korselt, `Probl\`eme chinois',
\textit{L'interm\'edinaire des math\'ematiciens}, \textbf{6} (1899),
142-143.
\bibitem[Ma]{Ma} K. Matom\"{a}ki, `On Carmichael numbers in arithmetic progressions', \textit{J. Aust. Math. Soc.}, \textbf{2} (2013), 1-8.
\bibitem[Me]{Me} R. Meshulam. `An uncertainty inequality and zero subsums', \textit{Discrete Math.}, \textbf{84(2)} (1990), 197-200.
\bibitem[MV]{MV} H. L. Montgomery and R.C. Vaughan, 'The large sieve', Mathematika 20 (1973), 119–-134.
\bibitem[Wr1]{WrCo} T. Wright, `A Conditional Density for Carmichael Numbers', \textit{J. Aust. Math. Soc.}, 101(3), 379--388.
\bibitem[Wr2]{WrF} T. Wright, ‘Factors of Carmichael numbers and a weak k-tuples conjecture’, \textit{J. Aust. Math. Soc.}
100(3) (2016), 421–429.
\bibitem[Wr3]{WrEv} T. Wright, `Factors of Carmichael Numbers and an Even Weaker $k$-Tuples Conjecture', \textit{Bull. Aust. Math. Soc.} 99 (2019), 376–384
\bibitem[Wr4]{WrA} T. Wright, 'Infinitely many Carmichael numbers in arithmetic progressions', \textit{Bull. Lond. Math. Soc.}, 45 (2013), 943-952.
\bibitem[Wr5]{WrV} T. Wright, `Variants of Korselt's Criterion', \textit{Canadian Mathematical Bulletin}, 58 (2015), 869--876.
\bibitem[Wr6]{WrE} T. Wright, ‘There are infinitely many elliptic Carmichael numbers’, \textit{Bull. London Math. Soc.} 50(5)
(2018), 791–800.
\bibitem[Xy]{Xy} Xylouris, T., ‘On the least prime in an arithmetic progression and estimates for the zeros of Dirichlet  L -functions’, \textit{Acta Arith.} 150 (1) (2011), 65–91.



%\bibitem[Zh]{Zh} Y. Zhang, `Bounded Gaps Between Primes', \textit{Annals of Mathematics}, \textbf{179} (2014), 1121--1174.

\end{thebibliography}
\end{document}